\newtheoremstyle{dotless}{}{}{\itshape}{}{\bfseries}{}{ }{}
\def\Real{\hbox{I\kern-.1667em\hbox{R}}}
\theoremstyle{dotless}	
\title{Square donuts and twistable holes}
\author{Kevin Murawski\\
    University of Notre Dame\\
    Notre Dame, Indiana  46556 \\
    kmurawsk@nd.edu\\
    \\
    Neil R. Nicholson\\
    University of Notre Dame\\
    Notre Dame, Indiana  46556 \\
    nnichol3@nd.edu\\
    \\
    Kathleen Walsh\\
    University of Notre Dame\\
    Notre Dame, Indiana  46556 \\
    kwalsh24@nd.edu\\
	}
\newtheoremstyle{dotless}{}{}{\itshape}{}{\bfseries}{}{ }{}
\def\Real{\hbox{I\kern-.1667em\hbox{R}}}
\theoremstyle{dotless}	
\newtheorem{thm}{Theorem}[section]
\begin{document}

\maketitle
%\doublespacing %REMOVE DOUBLESPACE UPON APPROVAL

\begin{abstract}
A mathematical donut is a rectangle of integral side length with a smaller rectangle (called the hole of the donut), also of integral side length, strictly inside it and with sides of the rectangles parallel to each other, where the area of the larger rectangle is twice that of the smaller.  Necessary and sufficient conditions are determined for when the hole of the donut can be rotated $90^{\circ}$ and a donut still exists, and a complete classification of all square or square-holed donuts is given, with the square donut classification being intimately related to Pythagorean triples.  \\

\noindent AMS Subject Classification: 11A51\\

\noindent Keywords: donut, rectangular area, Pythagorean triple
\end{abstract}

\section{Introduction}

In their paper, Nirode and Krumpe \cite{2} describe stumbling across a routine problem in a high school geometry book.  It describes a trend in middle age landscape design.  Rectangular courtyards regularly contained a rectangular garden, whose sides were parallel to those of the courtyard itself.  For aesthetic reasons, it was prescribed that the the garden's area would be exactly half that of the courtyard.  

Without further restrictions, this is simple to accomplish: place a divider down the middle of the courtyard and assign half the courtyard to be the garden.  However, the geometry text insisted that a path of fixed width surround the garden (so that the courtyard path surrounding the garden would be of the same area as the garden) \cite{1}.  Nirode and Krumpe did not insist on a fixed width border around the garden, but they did require the integral side lengths of both rectangles.  Thus, throughout this paper, all numbers referenced are positive integers.

Define a \textit{rectangular donut $D$} to be a number $D = ab$, where \\ $1 < b \leq a < D$ and $D = 2xy$, with $1 \leq x < a$ and $1 \leq y < b$.  Visually, $D$ is an $a \times b$ rectangle (called the \textit{exterior} of $D$) with a smaller $x \times y$ rectangle (referred to as the \textit{hole} of $D$) inside of it, where corresponding sides of the two rectangles are parallel.  In the notation of \cite{2}, $D$ will be written as the ordered quadruple $(a,b,x,y)$.

The connection proven in \cite{2} was that set of rectangular donuts is in correspondence with sums of Pythagorean triples (Theorem \ref{DonutThm} below).  But this is not a one-to-one correspondence due to the fact that a fixed donut exterior may have multiple hole variations.  For example, the donut $84$ can be realized with three different configurations: $(28,3,21,2)$, $(21,4,14,3)$, and $(12,7,7,6)$.  Using the terminology of Pythagorean triples, a donut $(a,b,x,y)$ is called \textit{primitive} if there is only a single configuration with the pairs $(a,x)$ and $(b,y)$ being relatively prime (that is, the corresponding lengths of the exterior and hole are share no common factors).  A donut $(a,b,x,y)$ is called \textit{quasi-primitive} when it has multiple configurations satisfying the requirement that corresponding pairs of side lengths are relatively prime.

Noting that the donuts less than $100$ that are primitive or quasi-primitive are $12$, $30$, $40$, $56$, $70$, $84$, and $90$, and that these are precisely the first ordered perimeters of primitive Pythagorean triangles (A024364, \cite{3}), the following theorem was proven.

\begin{thm} \label{DonutThm}
    {\rm \cite{2}} A number is a primitive or quasi-primitive rectangular donut if and only if it is the sum of a primitive Pythagorean triple.
\end{thm}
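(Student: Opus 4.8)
The plan is to reduce everything to the Euclid parametrization of primitive Pythagorean triples. Every primitive triple can be written, after possibly interchanging the two legs, as $(m^2-n^2,\,2mn,\,m^2+n^2)$ for a pair $(m,n)$ with $m>n\ge 1$, $\gcd(m,n)=1$, and $m\not\equiv n\pmod 2$, and its sum (perimeter) is $(m^2-n^2)+2mn+(m^2+n^2)=2m(m+n)$. So it suffices to show that $D$ is a primitive or quasi-primitive donut if and only if $D=2m(m+n)$ for some such pair $(m,n)$. I will also use the elementary observation that $D$ is primitive or quasi-primitive precisely when it has \emph{at least one} configuration $(a,b,x,y)$ with $\gcd(a,x)=\gcd(b,y)=1$ (a single such configuration makes $D$ primitive, two or more make it quasi-primitive), so in particular such a configuration is itself a witness that $D$ is a donut.

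For the reverse implication, given $D=2m(m+n)$ I would write down the explicit configuration $(a,b,x,y)=(2m,\,m+n,\,m+n,\,m)$. One checks directly that $ab=2xy=D$ and that the defining inequalities $1<b\le a<D$, $1\le x<a$, $1\le y<b$ all hold, since they amount to $0<n<m$ and $m\ge 2$. Relative primality is immediate: $\gcd(b,y)=\gcd(m+n,m)=\gcd(n,m)=1$, and $\gcd(a,x)=\gcd(2m,m+n)=\gcd(2,m+n)=1$ because $m+n$ is odd. Hence $D$ has a good configuration and is therefore a primitive or quasi-primitive donut.

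For the forward implication, I would start from such a good configuration $(a,b,x,y)$ of $D$. Because $ab=2xy$ and $\gcd(a,x)=1$, we get $a\mid 2y$; likewise $\gcd(b,y)=1$ gives $b\mid 2x$. Writing $2y=\alpha a$ and $2x=\beta b$ and multiplying, $4xy=\alpha\beta\,ab=2\alpha\beta\,xy$, so $\alpha\beta=2$. If $\alpha=2$ then $y=a$, contradicting $y<b\le a$; hence $\alpha=1$, $\beta=2$, that is $a=2y$ and $x=b$. Now $1=\gcd(a,x)=\gcd(2y,b)$ together with $\gcd(y,b)=1$ forces $b$ to be odd, since a prime dividing both $2y$ and $b$ cannot divide $y$ and so must equal $2$. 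Finally $y<b$ and $b=x<a=2y$ give $y<b<2y$, so setting $m=y$ and $n=b-y$ yields $1\le n<m$, $\gcd(m,n)=\gcd(y,b)=1$, and $m+n=b$ odd; thus $D=ab=2yb=2m(m+n)$ is the sum of the primitive triple attached to $(m,n)$.

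I expect the substantive point to be the short arithmetic step $\alpha\beta=2$, which at once rules out all but one shape of a good configuration; everything else is bookkeeping. The place to be careful is the handling of the strict inequalities in the definition of a donut — they are what let me exclude the case $\alpha=2$ (through $y<a$) and what I must re-verify for the quadruple $(2m,m+n,m+n,m)$ — together with faithfully carrying the opposite-parity condition across the substitution $b=m+n$.
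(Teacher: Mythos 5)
Your argument is correct, but there is nothing in this paper to measure it against: Theorem \ref{DonutThm} is stated here only as a citation of Nirode and Krumpe \cite{2}, and no proof of it appears in the text, so your proposal is in effect a self-contained substitute for the external reference rather than a variant of an internal argument. Your reduction is sound. The preliminary observation that ``primitive or quasi-primitive'' means exactly ``admits at least one configuration $(a,b,x,y)$ with $\gcd(a,x)=\gcd(b,y)=1$'' matches the paper's definitions (cf.\ the donut $84$, whose only coprime configuration is $(12,7,7,6)$). In the reverse direction the quadruple $(2m,\,m+n,\,m+n,\,m)$ does satisfy $ab=2xy=2m(m+n)$, all the defining inequalities, and the two coprimality conditions, the last using that $m+n$ is odd. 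In the forward direction the key step is exactly where you locate it: from $ab=2xy$, $\gcd(a,x)=1$, $\gcd(b,y)=1$ one gets $a\mid 2y$, $b\mid 2x$, and the multiplicative bookkeeping $\alpha\beta=2$ plus the strict inequality $y<b\le a$ forces $a=2y$, $x=b$; then $y<b<2y$, $\gcd(y,b)=1$, and $b$ odd give admissible Euclid parameters $m=y$, $n=b-y$, so $D=2m(m+n)$ is the perimeter of the primitive triple $(m^2-n^2,2mn,m^2+n^2)$. A pleasant byproduct of your argument, not visible from the statement alone, is the structural fact that every coprime configuration has the rigid shape $(2y,\,b,\,b,\,y)$. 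Two small presentational points: the sentence deducing that $b$ is odd is more convoluted than needed (if $b$ were even, $2$ would divide $\gcd(2y,b)=1$), and you could note explicitly that $m\ge 2$ follows from $m>n\ge 1$, which is what makes $b=m+n>1$ and $a<D$ automatic in the reverse direction.
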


This result, coupled with the fact that multiplying a Pythagorean triple by a constant yields additional Pythagorean triples (that is, the primitive donut $(a,b,x,y)$ yields additional nonprimitive donuts $(ha,kb,hx,ky)$), provides the aforementioned correspondence between Pythagorean triples and rectangular donuts. 

The focus of this paper is geometric.  We aim to completely classify both square and square-holed donuts (that is, any donut of the form $(a,a,x,y)$ or $(a,b,x,x)$, respectively).  
 Note that square-holed square donuts do not exist.  The proof of this is left to the reader.  Square donuts are a subcollection of donuts with \textit{twistable holes}; $(a,b,x,y)$ has a twistable hole if $(a,b,y,x)$ is also a donut.  In addition to classifying square and square-holed donuts, a necessary and condition for a hole to be \textit{twistable} is given.  

\section{Twistable holes and squares}

The notation used for a donut prescribes an orientation for the hole; in $(22,20,20,11)$, the exterior side of length $22$ is parallel to the hole side of length $20$.  In this case, the hole of the donut could not be rotated $90^{\circ}$; $(22,20,11,20)$ is not a donut.  In some donuts this is possible, such as $(21,20,15,14)$ and $(21,20,14,15)$.  This is an example of the previously defined \textit{twistable} donut.  
 
To determine if $(a,b,x,y)$ has a twistable hole, one need only consider two inequalities relative to the largest dimension of the donut's exterior.

\begin{thm}
The donut $(a,b,x,y)$ has a twistable hole if and only if both $2x > a$ and $2y>a$.
\end{thm}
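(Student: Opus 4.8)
The plan is to collapse the definition of a twistable hole to a single inequality and then show that this inequality is equivalent to the pair $2x>a$, $2y>a$.

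First I would unwind the definitions. Since $(a,b,x,y)$ is a donut we already have $1<b\le a$, $ab=2xy$, $1\le x<a$, and $1\le y<b$. The twisted quadruple $(a,b,y,x)$ keeps the same exterior, so the exterior conditions $1<b\le a<ab$ and the area identity $ab=2yx$ are inherited automatically; the only genuinely new requirements are $1\le y<a$ and $1\le x<b$. But $y<b\le a$ already gives $y<a$, so $(a,b,x,y)$ has a twistable hole if and only if $x<b$. This reduction is the one observation the whole argument rests on.

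Next I would deduce the two displayed inequalities from $x<b$. Because $(a,b,x,y)$ is a donut, $y\le b-1$; substituting this into $ab=2xy$ gives $ab\le 2x(b-1)$, hence $2x\ge \frac{ab}{b-1}>a$, the last step because $0<b-1<b$ and $ab>0$. (This used only that $(a,b,x,y)$ is a donut, so in fact $2x>a$ holds for every donut.) Symmetrically, twistability gives $x\le b-1$, and substituting into $ab=2xy$ gives $ab\le 2(b-1)y$, hence $2y\ge \frac{ab}{b-1}>a$. For the converse, assume $2x>a$ and $2y>a$; multiplying $2y>a$ by $x>0$ and using $ab=2xy$ yields $ab=2xy>ax$, so $b>x$, i.e.\ $x<b$, and the first step then says the hole is twistable.

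The computations are all one-liners, so I expect no real obstacle; the only thing to watch is strictness. One must use the integer strict bound $y\le b-1$ (and $x\le b-1$) rather than merely $y<b$, together with $\frac{ab}{b-1}>a$; replacing these by non-strict bounds would give only $2x\ge a$, which the donut $(22,20,20,11)$ (where $2y=a$ and the hole is not twistable) shows is insufficient. It is also worth remarking that, since $2x>a$ holds for any donut, the two hypotheses are not independent and "$2y>a$" alone already characterizes twistability; I would nonetheless keep the symmetric form, since it is precisely the condition preserved when one passes from $(a,b,x,y)$ to its twist $(a,b,y,x)$.
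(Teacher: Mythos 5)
Your proof is correct and follows essentially the same route as the paper: reduce twistability to the single extra condition $x<b$ (since $y<b\le a$ makes $y<a$ automatic) and translate everything through the area identity $ab=2xy$. One small remark: the detour through the integer bound $y\le b-1$ is unnecessary, since multiplying the strict inequality $y<b$ by $2x>0$ already gives $ab=2xy<2xb$, hence $2x>a$ strictly (and likewise for the other inequalities), which is exactly the computation in the paper.
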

\begin{proof}
Given that $(a,b,x,y)$ is a donut, we know 

\begin{equation} \label{eqn0}
 ab = 2xy,   
\end{equation}

\noindent as well as 

\begin{equation} \label{eqn1}
    1 \leq x < a
\end{equation}

\noindent and 

\begin{equation} \label{eqn2}
    1 \leq y < b.
\end{equation}

If we assume its hole is twistable, we additionally have that 

\begin{equation} \label{eqn3}
    1 \leq y < a
\end{equation}

\noindent and 

\begin{equation} \label{eqn4}
    1 \leq x < b.
\end{equation}

\noindent By this last inequality, 

\begin{equation}
2xy < 2yb, 
\end{equation}

\noindent or equivalently, 

\begin{equation}
    ab < 2xb, 
\end{equation}

\noindent showing that $a < 2x$.  Simililarly, using Eqn. \ref{eqn4}, we have that $a < 2y$.

Now suppose $2x> a$ and $2y > a$.  To show that the hole is twistable, we must show that the inequalities of Eqns. \ref{eqn3} and \ref{eqn4} hold.  By Eqn. \ref{eqn2} and since $b \leq a$, Eqn. \ref{eqn3} holds.  Then, the assumption $a < 2y$ yields

\begin{equation}
    ab < 2yb,
\end{equation}

\noindent or equivalently, 

\begin{equation}
    2xy < 2yb,
\end{equation}

\noindent so that $x < b$, as required.

\end{proof}

The hole of every square or square-holed donut is clearly twistable; $(12,12,8,9)$ and $(50,36,30,30)$ are two examples, respectively, that when the holes are twisted become the donuts $(12,12,9,8)$ and $(50,36,30,30)$.  Theorems \ref{square1} and \ref{square2} provide a complete classification of all such donuts.

\begin{thm} \label{square1}
 The donut $(a,b,n,n)$ is realizable if and only if there exist relatively prime factors $p$ and $q$ of $n$ with $p < q < 2p$.  
\end{thm}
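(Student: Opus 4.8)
The plan is to first convert realizability of $(a,b,n,n)$ into a divisibility statement about $2n^2$. If $(a,b,n,n)$ is a donut then $ab=2n^2$ with $1\le n<b\le a$; since $a=b$ would force $a^2=2n^2$, which is impossible, we in fact have $n<b<\sqrt{2}\,n<a$, so $b$ is a divisor of $2n^2$ lying strictly between $n$ and $\sqrt{2}\,n$. Conversely any such divisor $b$ gives the donut $(2n^2/b,\,b,\,n,\,n)$ once the (routine) defining inequalities are checked, where one uses that the interval $(n,\sqrt 2\,n)$ is empty unless $n\ge 2$. Thus the theorem reduces to: $2n^2$ has a divisor in the open interval $(n,\sqrt{2}\,n)$ if and only if $n$ has relatively prime factors $p<q<2p$.

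For the backward direction, suppose $p$ and $q$ are relatively prime factors of $n$ with $p<q<2p$, so that $pq\mid n$. I would split on the sign of $q^2-2p^2$, which cannot vanish since $\sqrt{2}$ is irrational. If $q^2<2p^2$, set $b=nq/p$; then $b\mid 2n^2$ because $2n^2/b=2p(n/q)$ is an integer, and the requirement $n<b<\sqrt 2\,n$ is exactly $p<q<\sqrt 2\,p$, which holds. If instead $q^2>2p^2$, set $b=2np/q$; now $2n^2/b=nq/p$ is an integer and $n<b<\sqrt 2\,n$ translates to $\sqrt 2\,p<q<2p$, which again holds. Either way we obtain the required divisor of $2n^2$.

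For the forward direction, start from a divisor $b$ of $2n^2$ with $n<b<\sqrt 2\,n$ and put $a=2n^2/b$. The natural candidate pair comes from writing the ratio $b/n$ (or $2n/b$) in lowest terms, and the real content is showing that its numerator and denominator are genuine divisors of $n$ and not merely of $2n$; this is where the lone factor of $2$ in $2n^2$ must be tracked carefully, and I expect it to be the main obstacle. I would resolve it with a parity split on $b$. If $b$ is odd, set $d=\gcd(b,n)$, $p=n/d$, $q=b/d$; these are relatively prime, and rewriting $ab=2n^2$ as $aq=2dp^2$ shows $q\mid 2d$, hence $q\mid d\mid n$ because $q$ is odd, while $q/p=b/n\in(1,\sqrt 2)\subset(1,2)$ gives $p<q<2p$. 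If $b$ is even, set $d'=\gcd(b,2n)$, $p=b/d'$, $q=2n/d'$; since $2\mid d'$ we have $q=n/(d'/2)\mid n$, and rewriting $ab=2n^2$ as $ap=qn$ together with $\gcd(p,q)=1$ forces $p\mid n$, while now $q/p=2n/b\in(\sqrt 2,2)\subset(1,2)$ again gives $p<q<2p$. In both cases $p$ and $q$ are relatively prime factors of $n$ with $p<q<2p$, which completes the proof.
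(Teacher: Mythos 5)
Your proof is correct, and its forward direction is genuinely different in execution from the paper's. The sufficiency argument is essentially the paper's: your two cases ($q^2<2p^2$ versus $q^2>2p^2$) produce exactly the sides $b=qn/p$ and $a=2pn/q$ that the paper writes down directly; the case split only decides which of the two is the shorter side and could be dropped if you constructed the donut rather than the divisor. The real divergence is in necessity. The paper works with $(a/n)(b/n)=2$, writes $a/n=2p/q$ and $b/n=q/p$ in lowest terms, and then asserts that $p$ and $q$ ``are a product of some of the prime factors of $n$'' --- i.e.\ it never actually proves that $p$ and $q$ divide $n$, and it spends effort on an unnecessary detour ruling out $n$ being a prime power. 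You instead reduce realizability to the existence of a divisor $b$ of $2n^2$ with $n<b<\sqrt{2}\,n$ and then track the single factor of $2$ by a parity split: when $b$ is odd, $aq=2dp^2$ with $\gcd(p,q)=1$ forces $q\mid 2d$, hence $q\mid d\mid n$; when $b$ is even, $2\mid\gcd(b,2n)$ gives $q\mid n$ and $ap=qn$ forces $p\mid n$. I checked both cases and the interval arithmetic ($b/n\in(1,\sqrt 2)$, resp.\ $2n/b\in(\sqrt 2,2)$), and they are sound. What your route buys is rigor exactly where the paper is weakest --- the divisibility of $n$ by $p$ and $q$ is proved, not asserted --- at the cost of a slightly longer argument and a case analysis; the paper's version buys brevity and a symmetric presentation of the two directions.
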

\begin{proof}
    Suppose $p|n$ and $q|n$ with gcd$(p,q) = 1$ and $p < q < 2p$.  Then, defining $a = 2pn/q$ and $b = qn/p$ yields a donut, since both 

    \begin{equation}
    1 < \dfrac{2p}{q},
\end{equation}

\noindent and

\begin{equation}
    1 < \dfrac{q}{p},
\end{equation}

\noindent so that $a > n$ and $b > n$, as well as $ab = 2n^2$.

Now, suppose that $(a,b,n,n)$ is a donut, so that $a > n$, $b > n$ and $ab = 2n^2$.  Let $n$ have prime factorization given by

\begin{equation}
    n = p_1^{e_1} \ldots p_k^{e_k}.
\end{equation}

\noindent Note that it is impossible for $k = 1$, for in this scenario, 

\begin{equation}
    ab = 2p_1^{2e^1},
\end{equation}

\noindent and the product $2p_1^{2e_1}$ cannot be partitioned into two terms so that both

\begin{equation}
    a > 2p_1^{\alpha_1}
\end{equation}

\noindent and 

\begin{equation}
    b > p_1^{\alpha_2}.
\end{equation}

Thus, $k \geq 2$.  In order to show the existence of $p$ and $q$, because

\begin{equation}
    \left( \dfrac{a}{n} \right) \left( \dfrac{b}{n} \right) = 2,
\end{equation}

\noindent it must be the case that, without loss of generality,

\begin{equation}
    \dfrac{a}{n} = \dfrac{2p}{q}
\end{equation}

\noindent and 

\begin{equation}
    \dfrac{b}{n} = \dfrac{q}{p},
\end{equation}

\noindent where $p$ and $q$ are a product of some of the prime factors of $n$.  Becase $k \geq 2$ and that we can simply reduce each fraction to lowest terms, we have that ${\rm gcd}(p,q) = 1$.  Lastly, since both $a > n$ and $b > n$, the result follows since both

\begin{equation}
    \dfrac{2p}{q} > 1
\end{equation}

\noindent and 

\begin{equation}
    \dfrac{q}{p} > 1.
\end{equation}
\end{proof}

Having classified all square-holed donuts, we move now to classifying all square donuts.  It is worth reminding ourselves of the result in \cite{2}.  A number (which is the product of the dimensions of the outer rectangle of the donut) is a donut if and only if it is the sum of a Pythagorean triple.  In Thm. \ref{square2}, the classification of donuts of the form $(n,n,a,b)$ depends not on $n^2$ (as Thm. \ref{DonutThm} does) but $n$ itself being the sum of a Pythagorean triple.

To illustrate this results, consider the donut $(12,12,9,8)$.  Because \\$12^2 = 16 + 63 + 65$, with $(16,63,65)$ indeed being a Pythagorean triple, Thm. \ref{DonutThm} guarantees it is a donut.  However, if we were searching for square donuts, because $12 = 3+4+5$, we would know one exists.  However, even though $(10,9,9,5)$ is a donut (because $90$ is the sum of a Pythagorean triple), it would be fruitless to attempt to find a square donut of side length $9$ or $10$, as these numbers themselves are not the sum of a Pythagorean triple.

\begin{thm} \label{square2}
The donut of side length $n$ (that is, $(n,n,a,b)$ for some $a$ and $b$) is realizable if and only if $n$ is the sum of a Pythagorean triple.    
\end{thm}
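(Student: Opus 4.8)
The plan is to prove both directions by translating the donut condition $(n,n,a,b)$ directly into a statement about Pythagorean triples. A donut $(n,n,a,b)$ requires $n^2 = 2ab$ with $1 \le a < n$ and $1 \le b < n$. The key algebraic observation is that $n^2 = 2ab$ forces a factorization: writing $n = cd$ where the hole dimensions absorb the factors appropriately, one expects $a = 2c^2 m$, $b = d^2 m$ type expressions, but the cleanest route is to work with the \emph{complementary} quantities. Since $2ab = n^2$, set $a = n - s$ and $b = n - t$; then $n^2 = 2(n-s)(n-t)$ expands to $n^2 = 2n^2 - 2n(s+t) + 2st$, i.e. $n^2 - 2n(s+t) + 2st = 0$. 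I would treat this as a quadratic in $n$ and extract the discriminant condition, but I expect the slicker path is to directly produce a Pythagorean triple summing to $n$ from a donut, and conversely.

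For the forward direction, suppose $(n,n,a,b)$ is a donut. From $n^2 = 2ab$ I would factor $n$ as $n = pq$ where, after writing $a/n$ and $b/n$ in the form forced by $(a/n)(b/n) = 2$ (exactly as in the proof of Theorem \ref{square1}), we get $a = 2p'^2 r$ and $b = q'^2 r$ style expressions with $n = p' q' r$ — more precisely, $2ab = n^2$ with $\gcd$ considerations gives $a = 2u^2 w$, $b = v^2 w$, $n = 2uvw$ when the factor of $2$ lands on one side, or the symmetric variant. The point is that $n$ decomposes as $n = 2uvw$ (or $n = uvw$ with adjustments) in a way that makes $u,v$ the generators of a Pythagorean triple: the triple $(w(v^2 - u^2 \cdot \text{stuff}), \ldots)$ scaled appropriately should sum to $n$. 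I would compute: the standard primitive triple generated by parameters $m > \ell$ is $(m^2 - \ell^2,\, 2m\ell,\, m^2 + \ell^2)$ with perimeter $2m^2 + 2m\ell = 2m(m+\ell)$. So I need $n$ (or a divisor-scaled version) to equal $2m(m+\ell)$ for some $m > \ell \ge 1$ — and the donut inequalities $a < n$, $b < n$ should be exactly what guarantee $m > \ell \ge 1$ rather than degenerate values. This matches the examples: $12 = 2\cdot 2 \cdot 3 = 2m(m+\ell)$ with $m=2, \ell=1$, giving $(3,4,5)$.

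For the converse, suppose $n$ is the sum of a Pythagorean triple, so $n = k \cdot 2m(m+\ell)$ for some scaling $k \ge 1$ and $m > \ell \ge 1$ (absorbing the scaling into the perimeter formula). Then I would explicitly exhibit the hole: set $a$ and $b$ so that $2ab = n^2$ and both are less than $n$. Using $n = 2km(m+\ell)$, a natural choice is $a = 2k m^2 \cdot (\text{something})$ — concretely I would try $a = n \cdot \frac{m}{m+\ell}$ and $b = n \cdot \frac{m+\ell}{2m}$, check $ab = \frac{n^2}{2}$, wait, I need $2ab = n^2$ so $ab = n^2/2$, giving $a = n \cdot \frac{m}{m+\ell}$, $b = n \cdot \frac{m+\ell}{2m}$ with $ab = n^2 \cdot \frac{m}{m+\ell} \cdot \frac{m+\ell}{2m} = n^2/2$ as needed. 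Then $a < n$ since $\frac{m}{m+\ell} < 1$, and $b < n$ since $\frac{m+\ell}{2m} < 1 \iff m + \ell < 2m \iff \ell < m$, which holds. Finally I must check $a, b$ are positive integers: with $n = 2km(m+\ell)$, $a = 2km^2$ and $b = k(m+\ell)^2$, both manifestly integers, and both at least $1$.

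The main obstacle I anticipate is the forward direction's integrality bookkeeping: from $2ab = n^2$ alone one cannot immediately conclude $a = 2km^2$, $b = k(m+\ell)^2$ — one must argue that the way the prime $2$ and the common factors distribute between $a$ and $b$ forces the "generator" structure. This requires carefully analyzing $\gcd(a,b)$ and the $2$-adic valuations, essentially reconstructing the parametrization of all (not just primitive) Pythagorean triples from the equation $2ab = n^2$. The inequalities $a < n$ and $b < n$ must be shown to preclude the degenerate factorizations (such as $\ell = 0$, which would make the "triple" $(m^2, 2\cdot 0, m^2)$ not a genuine triple, or equivalently put the hole on the boundary). I would handle this by noting that $a < n$ and $b < n$ together with $ab = n^2/2$ bound the ratio $a/b$ strictly between $1/2$ and $2$, which is precisely the constraint $p < q < 2p$ from Theorem \ref{square1}'s style of argument and which translates to $\ell < m$ and $\ell \ge 1$ in generator language — so the square-holed classification machinery transfers almost verbatim, with the Pythagorean perimeter formula $2m(m+\ell)$ being the bridge to the triple-sum reformulation.
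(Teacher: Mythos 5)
Your proposal is sound, and while your construction for the ``if'' direction is the same as the paper's (given $n = 2km(m+\ell)$, take $a = 2km^2$, $b = k(m+\ell)^2$, exactly the paper's $a = 2kp^2$, $b = k(p+q)^2$), your ``only if'' direction takes a genuinely different route. The paper does not reconstruct Euclid generators from $2ab = n^2$ at all: it sets $h = \gcd(n,a)$, $k = \gcd(n,b)$, observes that $(n_h, n_k, a_h, b_k)$ is a primitive donut, invokes Theorem \ref{DonutThm} (Nirode--Krumpe) to get $n_h n_k = x+y+z$ for a primitive triple, and then proves $n = {\rm lcm}(h,k)$ by a prime-exponent $\max/\min$ computation, concluding $n = \gcd(h,k)(x+y+z)$. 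You instead parametrize the solutions of $2ab = n^2$ directly, and the ``integrality bookkeeping'' you flag as the main obstacle is genuinely fillable by standard means: write $a = g\alpha$, $b = g\beta$ with $g = \gcd(a,b)$ and $\gcd(\alpha,\beta)=1$; then $g \mid n$ and $2\alpha\beta = (n/g)^2$, so exactly one of $\alpha,\beta$ is even, and the odd one together with half the even one are coprime with square product, hence each is a square, giving (up to swapping $a$ and $b$) $a = 2gu^2$, $b = gv^2$, $n = 2guv$. The donut inequalities $a < n$ and $b < n$ then give exactly $u < v < 2u$, so $m = u$, $\ell = v - u$ satisfy $1 \le \ell < m$ and the triple $g(m^2-\ell^2,\, 2m\ell,\, m^2+\ell^2)$ sums to $2gm(m+\ell) = 2guv = n$, as you indicated with the ratio bound $1/2 < a/b < 2$. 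Your route is more self-contained (it does not lean on Theorem \ref{DonutThm} or the ${\rm lcm}$ argument) and makes the generator structure of the hole explicit, at the cost of the square-factorization lemma and the case analysis on where the factor $2$ lands; the paper's route is shorter because it outsources the number theory to the existing donut--triple correspondence of \cite{2}. To turn your sketch into a proof, you only need to replace the vague middle step ($a = 2u^2w$ ``style expressions'') with the $\gcd$ argument above and state the symmetric case explicitly.
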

\begin{proof}
    Begin by assuming $n = x+y+z$ for a Pythagorean triple $(x,y,z)$.  Then, $x = k(p^2-q^2)$, $y = k(2pq)$, and $z = k(p^2 + q^2)$, via Euclid's formula for Pythagorean triples (where $q < p$).  Then, 

    \begin{equation} \label{squarehole1}
        n = k(2p^2+2pq),
    \end{equation}

    \noindent so that 

    \begin{equation} \label{squarehole2}
        n^2 = 2 \left[ 2k^2p^2(p+q)^2\right].
    \end{equation}

    If we set $a = 2kp^2$ and $b = k(p+q)^2$, we claim that $(n,n,a,b)$ is a donut.  As Eqn. \ref{squarehole2} demonstrates the area requirement to be a donut, we need only show that both $a < n$ and $b < n$.  Equation \ref{squarehole1} gives $a < n$.  Since $q < p$, we have that 

    \begin{equation}
        p^2 + 2pq + q^2 < p^2 + 2pq + p^2,
    \end{equation}

    \noindent or equivalently, 

    \begin{equation}
        (p+q)^2 < 2p^2 + 2pq,
    \end{equation}

    \noindent yielding $b < n$.

    Next, assume that $(n,n,a,b)$ is a donut.  Because $n^2 = 2ab$, the donut is not primitive (since gcd$(n,a) \neq 1$ and gcd$(n,b) \neq 1$).  Define $h = {\rm gcd}(n,a)$ and $k = {\rm gcd}(n,b)$, with $n = h \cdot n_h = k \cdot n_k$, $a = h \cdot a_h$, and $b = k \cdot b_k$.  Note that both ${\rm gcd}(n_h,a_h) = 1$ and ${\rm gcd}(n_k,b_k) = 1$.

    By this construction, the donut $(n_h,n_k,a_h,b_k)$ is primitive.  More, by Thm. 1 of \cite{2}, 

    \begin{equation} \label{nhnksum}
        n_hn_k = x + y + z,
    \end{equation}

    \noindent for some primitive Pythagorean triple $(x,y,z)$.  We claim that 

    \begin{equation}
        n = {\rm gcd}(h,k) \left[ x + y + z \right].
    \end{equation}

    Note that proving this claim consequently proves the result, as any constant multiple of a Pythagorean triple is itself a Pythagorean triple.  Moreover, if we are able to show that $n = {\rm lcm}(h,k)$, this claim will be proven, since Eqn. \ref{nhnksum} yields

    \begin{align}
         n^2 &= hk \left[ x + y + z\right]\\
         &= {\rm gcd}(h,k){\rm lcm}(h,k)\left[ x + y + z\right].
    \end{align}

     To that end, suppose $n$ has prime factorization given by 

     \begin{equation}
         n = 2^{e_0}p_1^{e_1} \ldots p_m^{e_m},
     \end{equation}

     \noindent written in this form since $n$ is necessarily even.  Because $n^2 = 2ab$, we know the prime factorizations of $a$ and $b$ take similar forms:

     \begin{equation}
         a = 2^{a_0}p_1^{a_1} \ldots p_m^{a_m},
     \end{equation}

    \noindent and

    \begin{equation}
         b = 2^{b_0}p_1^{b_1} \ldots p_m^{b_m},
     \end{equation}

     \noindent with the $a_i$ and $b_i$ exponents possibly being $0$.  Then, take $h = {\rm gcd}(n,a)$ and $k = {\rm gcd}(n,b)$.  In terms of prime factorizations, we have that 

     \begin{equation}
         h = 2^{{\rm min}(e_0, a_0)  }p_1^{ {\rm min}(e_1, a_1) } \ldots p_m^{ {\rm min}(e_m, a_m) } 
     \end{equation}

     \noindent and 

     \begin{equation}
         k = 2^{{\rm min}(e_0, b_0)  }p_1^{ {\rm min}(e_1, b_1) } \ldots p_m^{ {\rm min}(e_m, b_m) }. 
     \end{equation}

     We have that

     \begin{align}
         {\rm lcm}(h,k) = &2^{ {\rm max}( {\rm min}(e_0,a_0),{\rm min}(e_0,b_0)   ) }p_1^{ {\rm max}( {\rm min}(e_1,a_1),{\rm min}(e_1,b_1)   )  } \\ 
         &\ldots p_m^{ {\rm max}( {\rm min}(e_m,a_m),{\rm min}(e_m,b_m)   )  }. 
     \end{align}
    \end{proof}

    \noindent We also know that 

    \begin{equation} \label{exponent1}
        a_0 + b_0 + 1 = 2e_0,
    \end{equation}

    \noindent and

    \begin{equation}
        a_i + b_i = 2e_i
    \end{equation}

    \noindent for $i \geq 1$.  In either case it follows that either $a_i \leq e_i \leq b_i$ or $b_i \leq e_i \leq a_i$ (with the inequalities following in the case of Eqn. \ref{exponent1} since $a_0 \neq b_0$).  Hence, for all $i$, 

    \begin{equation}
        e_i = {\rm max}( {\rm min}(e_i,a_i), {\rm min}(e_i,b_i)), 
    \end{equation}

    \noindent showing that ${\rm lcm}(h,k) = n$, as desired.

\section{Conclusion}

The results in this paper are just the tip of the iceberg when it comes to investigating mathematical donuts.  A multitude of directions for potential research appear in \cite{2}, but it is worth noting some of the geometrically-aligned questions here.  

A very natural generalization of a mathematical donut (a two-dimensional object) would be to consider three-dimensional donuts.  Is there a classification of these donuts similar to that in \cite{2}?  How about classifying cube or cube-holed three-dimensional donuts?

Back in two-dimensions, there is no shortage of potential avenues to explore.  Can two-holed donuts be classified?  What about donuts that take the shape of an everyday waffle: an $n \times m$ array of equally spaced holes?  Both of these questions can be specified further by requiring the donuts to be \textit{perfect}, where the border between the hole and the exterior has a constant width.

Defining a cost function on donuts (for example, should the aforementioned donuts $(90,90,81,50)$ and $(90,90,75,54)$ cost the same, even though the holes have different dimensions?) or colorings on donuts (perhaps playing the role of icing on a donut) are two examples of connecting donuts to other mathematical areas.

\end{document}